\numberwithin{equation}{section}
\newtheorem{theorem}{Theorem}[section]  
\newtheorem{theorem?}{``Theorem''}[section]  
\newtheorem{corollary}[theorem]{Corollary}
\newtheorem{proposition}[theorem]{Proposition}
\newtheorem{lemma}[theorem]{Lemma}
\theoremstyle{definition}
\theoremstyle{remark}
\newtheorem{remark}[theorem]{Remark}  
\newcommand{\R}{{\mathbb R}}
\newcommand{\C}{{\mathbb C}}
\newcommand{\N}{{\mathbb N}}
\newcommand{\Z}{{\mathbb Z}}
\renewcommand{\a}{\alpha}
\begin{document}
\title[Oscillatory intagrals]
{
Asymptotic expansion of 
oscillatory integrals \\ 
with singular phases 
} 
\author{Joe Kamimoto and Hiromichi Mizuno}
\address{Faculty of Mathematics, Kyushu University, 
Motooka 744, Nishi-ku, Fukuoka, 819-0395, Japan} 
\email{
joe@math.kyushu-u.ac.jp}
\address{Faculty of Mathematics, Kyushu University, 
Motooka 744, Nishi-ku, Fukuoka, 819-0395, Japan}
\email{mizuno.hiromichi.965@s.kyushu-u.ac.jp}
\keywords{oscillatory integral, asymptotic expansion, 
Fresnel integral, Laplace integral}
\subjclass[2010]{42A38 (41A60).}
\maketitle


\begin{abstract}
The purpose of this article is to 
describe the singularities of 
one-dimensional oscillatory integrals, 
whose phases have a certain singularity, 
in the form of an asymptotic expansion. 
In the case of the Laplace integral,  
an analogous result is also given. 

\end{abstract}




\section{Introduction}

In the investigation of various issues in mathematics, 
oscillatory integrals of the form 
\begin{equation}\label{eqn:1.1}
	I(t)=
\int_{-\infty}^{\infty} e^{ i t f(x)} \varphi(x)dx
	\quad\,\,\, \text{for $t \in \R$}
\end{equation}
appear and 
their properties often play important roles in it. 
Here $f$ and $\varphi$ are real-valued $C^{\infty}$ functions 
defined on an open interval in $\R$ containing the origin 
and the support of $\varphi$ is contained in this interval. 
The functions $f$ and $\varphi$ are called 
the {\it phase} and
the {\it amplitude} respectively. 

It is easy to see that 
the integral in (\ref{eqn:1.1}) can be regarded as 
a $C^{\infty}$ function of $t$ on $\R$.  
Moreover, 
the behavior of this function for large $t$ 
is well understood 
(cf. \cite{AGV88}, \cite{Hor90}, \cite{Ste93}).   
If there is $k\in\N$ with $k\geq 2$ such that 
$f'(0)=\cdots=f^{(k-1)}(0)=0$ and $f^{(k)}(0)\neq 0$,  
then for any positive integer $N$, 
\begin{equation}\label{eqn:1.2}
I(t)=\sum_{n=1}^{N-1} c_n t^{-n/k} + O(t^{-N/k})
\quad \mbox{as $t \to +\infty$},
\end{equation}
where $c_n$ are constants depending 
on $f$ and $\varphi$, and the exact value of 
the first coefficient $c_1$ can be explicitly given.
When $f$ does not have a critical point,
its behavior is obvious in the sense:
$I(t)=O(t^{-N})$ as $t\to\infty$ for 
any positive integer $N$.
More generally, 
in the case where $k$ is a positive real number,
the asymptotic expansion of $I(t)$ can be
expressed in a similar fashion to (\ref{eqn:1.2})
(see \cite{Nag20}).

Corresponding to the above regular phase case, 
the case where $f$ has singularities
seems to be not well thought. 
In this article, we investigate
the properties of integrals of the form
\begin{equation}\label{eqn:1.3}
I_{\alpha}(t)=
\int_{0}^{\infty}\exp
\left( i t\frac{1}{x^{\alpha}}
\right) \varphi(x)dx
\quad\,\,\, \text{for $t \in \R$,}
\end{equation}
where $\alpha$ is a positive real number and 
$\varphi$ is the same as that in (\ref{eqn:1.1}). 
There have been many interesting numerical analyses 
of the integral (\ref{eqn:1.3})
(\cite{Has091}, \cite{Has092}, 
\cite{KaX10}, \cite{MoX11}, etc.), while
we will investigate this integral 
from the viewpoint of the asymptotic expansion.

From the convergence of the integral, 
the integral (\ref{eqn:1.3}) can be regarded as a function of $t$ 
defined on $\R$, but 
the smoothness of this function is not obvious. 
The convergence of the derivatives of 
the integrand in (\ref{eqn:1.3}) 
only implies that 
$I_{\alpha}(t)$ is a $C^{\lceil1/\alpha \rceil-1}$ function 
on $\R$. 
For example, 
$I_1(t)$ is continuous but 
it is not differentiable if $\varphi(0)>0$ 
and $\varphi\geq 0$ on $\R$.
It is interesting to consider 
what kind of singularities $I_{\alpha}(t)$ has. 
The following theorem exactly shows 
the singular part of $I_{\alpha}(t)$.

\begin{theorem}\label{thm:1.1}
(i)\quad 
If $\alpha>0$ is a rational number, 
then for any positive integer $N$, 
\begin{equation}\label{eqn:1.4}
I_{\alpha}(t) = 
\sum_{n=1}^{N} A_{n} t^{n/\alpha}+
\sum_{n=1}^{N} B_{n} t^{n/\alpha} \log t+\psi_N(t)
\quad \mbox{ for $t\in\R_+$ },
\end{equation}
where $\psi_N(t)$ is a 
$C^{\lceil \frac{N+1}{\alpha}\rceil-1}$ function on $\R_+$ and   
\begin{equation}\label{eqn:1.5}
\begin{split}
&A_n=\frac{e^{-\frac{n}{2\alpha}\pi  i }}{\alpha}
\frac{\varphi^{(n-1)}(0)}{(n-1)!}
\Gamma(-n/\alpha) \,\,\,
\mbox{if $n/\alpha \not\in\N$}, \quad 
A_n=0 \,\,\, \mbox{if $n/\alpha\in\N$,}\\
&B_n=\frac{-1}{\alpha}
\frac{\varphi^{(n-1)}(0)}{(n-1)!}
\frac{ i^{n/\alpha}}{(n/\alpha)!}
   \,\,\, 
\mbox{if $n/\alpha \in\N$},   \quad 
B_n=0 \,\,\, \mbox{if $n/\alpha\not\in\N$,}
\end{split}
\end{equation}
for $n\in\N$,
where $\Gamma$ means the Gamma function.

(ii)\quad 
If $\alpha>0$ is not a rational number, then
for any positive integer $N$, 
\begin{equation}\label{eqn:1.6}
I_{\alpha}(t) = 
\sum_{n=1}^{N} A_{n} t^{n/\a}+\phi_N(t)
\quad \mbox{ for $t\in\R_+$,}
\end{equation}
where $\phi_N(t)$ is a $C^{\lceil \frac{N+1}{\alpha}\rceil-1}$ 
function on $\R$ and 
$A_n$ are as in (\ref{eqn:1.5})
\end{theorem}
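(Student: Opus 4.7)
The approach is to Taylor-expand $\varphi$ at the origin against a smooth cutoff, reducing $I_\alpha$ to a sum of monomial integrals plus a smooth remainder, and then to evaluate each monomial integral by a change of variable and a scaling argument. Fix $\chi\in C_c^\infty(\R)$ with $\chi\equiv 1$ near $0$ and write
$$
\varphi(x)=\chi(x)\sum_{k=0}^{N-1}\frac{\varphi^{(k)}(0)}{k!}x^k+\rho_N(x),
$$
where $\rho_N\in C_c^\infty$ vanishes to order $N$ at $0$; correspondingly
$$
I_\alpha(t)=\sum_{k=0}^{N-1}\frac{\varphi^{(k)}(0)}{k!}J_k(t)+E_N(t),
$$
with $J_k(t)=\int_0^\infty e^{it/x^\alpha}\chi(x)x^k\,dx$ and $E_N(t)=\int_0^\infty e^{it/x^\alpha}\rho_N(x)\,dx$. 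Differentiating $E_N$ $m$ times under the integral produces an integrand of size $O(x^{N-m\alpha})$ near $x=0$, integrable exactly when $m<(N+1)/\alpha$; dominated convergence then yields $E_N\in C^{\lceil(N+1)/\alpha\rceil-1}$, which accounts for the claimed regularity of $\psi_N$ and $\phi_N$.

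For each $J_k$, substitute $u=1/x^\alpha$ to obtain
$$
J_k(t)=\frac{1}{\alpha}\int_0^\infty e^{itu}\,\tilde\chi(u)\,u^{-(k+1)/\alpha-1}\,du,
$$
with $\tilde\chi(u)=\chi(u^{-1/\alpha})$ identically $1$ for $u\ge u_0$ and vanishing near $0$. The piece $\int_0^{u_0}$ is compactly supported in $u$ and hence entire in $t$; on $[u_0,\infty)$, rescale $v=tu$ to extract the natural scale
$$
\int_{u_0}^\infty e^{itu}u^{-(k+1)/\alpha-1}\,du=t^{(k+1)/\alpha}\int_{u_0 t}^\infty e^{iv}v^{-(k+1)/\alpha-1}\,dv,
$$
then split $\int_{u_0 t}^\infty=\int_{u_0 t}^1+\int_1^\infty$ (the latter a $t$-independent constant) and on the former Taylor-expand $e^{iv}=\sum_{j=0}^{J-1}(iv)^j/j!+r_J(v)$ with $J>(k+1)/\alpha$. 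The polynomial integrals $\int_{u_0 t}^1 v^{j-(k+1)/\alpha-1}\,dv$ evaluate to $(1-(u_0t)^{j-(k+1)/\alpha})/(j-(k+1)/\alpha)$ when $j\ne(k+1)/\alpha$, and to $-\log(u_0 t)$ when $j=(k+1)/\alpha\in\N$. After multiplication by $t^{(k+1)/\alpha}$, the $(u_0 t)^{j-(k+1)/\alpha}$ parts become smooth integer powers $t^j$, the $t$-independent parts give multiples of $t^{(k+1)/\alpha}$, and the log case yields a term in $t^{(k+1)/\alpha}\log t$. Setting $n=k+1$ and summing over $k$ reproduces the singular terms $A_n t^{n/\alpha}$ and $B_n t^{n/\alpha}\log t$ of (\ref{eqn:1.4}); in (\ref{eqn:1.6}) the log terms are absent because $n/\alpha\notin\N$ for all $n\in\N$ when $\alpha$ is irrational.

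The explicit values of $A_n$ and $B_n$ are pinned down by matching against the meromorphic identity $\int_0^\infty e^{itu}u^{s-1}\,du=\Gamma(s)e^{i\pi s/2}t^{-s}$, analytically continued in $s$ to $s=-n/\alpha$. For $n/\alpha\notin\N$ this is regular and yields the stated formula for $A_n$ directly. The main obstacle is the case $n/\alpha=m\in\N$: here $\Gamma$ has a simple pole at $s=-m$, whose apparent divergence in $t^m$ must be shown to cancel exactly against a parallel divergence in the polynomial-Taylor contribution above. One Laurent-expands $\Gamma(s)e^{i\pi s/2}t^{-s}$ about $s=-m$, isolates the finite $t^m\log t$ part, and uses $(-1)^m(-i)^m=i^m$ to recover $B_n=-\frac{1}{\alpha}\frac{\varphi^{(n-1)}(0)}{(n-1)!}\frac{i^{n/\alpha}}{(n/\alpha)!}$ in agreement with (\ref{eqn:1.5}).
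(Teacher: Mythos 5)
Your overall architecture coincides with the paper's: cut off near the singular point, Taylor-expand $\varphi$ to order $N$, absorb the remainder into a $C^{\lceil(N+1)/\alpha\rceil-1}$ error (your $E_N$ is the paper's $R_N$, justified the same way by dominated convergence), and reduce everything to model integrals $\int e^{itu}u^{-\gamma-1}\tilde\chi(u)\,du$ with $\gamma=(k+1)/\alpha$. Where you genuinely diverge is in extracting the singular part of these model integrals. The paper proves two separate lemmas: for $\gamma\notin\Z$ it integrates by parts repeatedly to lower the exponent into $(-1,0)$, where the integral converges and is evaluated by the generalized Fresnel integral (Lemma~2.1, proved by contour integration), and for $\gamma\in\Z$ it splits at $u=1/t$ and expands $e^{itu}-1$. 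You instead rescale $v=tu$, split at $v=1$, and Taylor-expand $e^{iv}$ on $[u_0t,1]$, which treats both cases uniformly: the $j=\gamma$ term produces the $t^{\gamma}\log t$ singularity with the correct coefficient $-i^{\gamma}/\gamma!$ directly, and the $j\neq\gamma$ terms sort themselves into entire powers $t^j$ and a constant multiple of $t^{\gamma}$. This is a clean unification; what it costs you is that the coefficient of $t^{\gamma}$ emerges as an unilluminating sum of constants, and identifying it with $e^{-\frac{\gamma}{2}\pi i}\Gamma(-\gamma)$ still requires the contour-integral evaluation of $\int_0^\infty e^{iv}v^{s-1}dv$ for $0<\mathrm{Re}\,s<1$ plus an analytic-continuation argument in $s$ --- i.e., you have not avoided the paper's Lemma~2.1, only repackaged how it enters.

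Two points need to be written out before this is a complete proof. First, you introduce the Taylor remainder $r_J(v)$ but never dispose of its contribution $t^{\gamma}\int_{u_0t}^{1}r_J(v)v^{-\gamma-1}dv$; you should split it as $\int_0^1-\int_0^{u_0t}$ and integrate the series $r_J(v)=\sum_{j\ge J}(iv)^j/j!$ termwise to see that it contributes a constant times $t^{\gamma}$ plus an entire power series in $t$ (this is exactly the role of the auxiliary function $h(x,t)$ in the paper's proof of Lemma~2.3). Second, the ``matching against the meromorphic identity'' for $A_n$ should be made precise: define $G(s)=\int_1^\infty e^{iv}v^{s-1}dv+\sum_{j<J}\frac{i^j}{j!(j+s)}+\int_0^1 r_J(v)v^{s-1}dv$, check that it is holomorphic for $\mathrm{Re}\,s>-J$ away from the nonpositive integers and agrees with $\Gamma(s)e^{i\pi s/2}$ on $0<\mathrm{Re}\,s<1$, and evaluate at $s=-\gamma$. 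Both gaps are routine to fill, so I regard the proposal as correct.
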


\begin{remark}
(1) \quad
If the branch of $t^{n/\alpha}$ for $t<0$ is chosen
as $t^{n/\alpha}=e^{ i n\pi/\alpha} |t|^{n/\alpha}$, then
the equation in (\ref{eqn:1.6}) holds for all $t\in\R$, 
where  $\phi_N(t)$ is a 
$C^{\lceil \frac{N+1}{\alpha}\rceil-1}$ 
function on $\R$. 

(2) \quad
We can obtain a similar result to (\ref{eqn:1.4}) 
in the case where $t\leq 0$.
If $\alpha>0$ is a rational number, 
then for any positive integer $N$, 
\begin{equation*}\label{eqn:1.}
I_{\alpha}(t) = 
\sum_{n=1}^{N} A_{n} t^{n/\alpha}+
\sum_{n=1}^{N} {B}_{n} t^{n/\alpha} \log |t|+\phi_N(t)
\quad \mbox{ for $t\in\R_-$, }
\end{equation*}
where $B_n$ are as in (\ref{eqn:1.5})  and 
$\phi_N(t)$ is a 
$C^{\lceil \frac{N+1}{\alpha}\rceil-1}$ function on $\R_-$. 

(3)\quad 
When $\alpha$ is rational, 
$\alpha$ is uniquely denoted by $\alpha=p/q$, 
where $p,q\in\N$ with $(p,q)=1$. 
In this case, 
the second term in (\ref{eqn:1.4}) can be rewritten as 
\begin{equation}\label{eqn:1.7}
\sum_{n=1}^{N} B_{n} t^n \log t = 
\sum_{n=1}^{\lfloor\frac{N}{q}\rfloor}
\check{B}_n t^{qn} \log t \quad 
\mbox{ for $t\in\R_+$,} 
\end{equation}
with
$$
\check{B}_n=
\frac{-q}{p}
\frac{\varphi^{(pn-1)}(0)}{(pn-1)!}
\frac{ i ^{qn}}{(qn)!} \quad \mbox{for $n\in\N$}.
$$
\end{remark}

It follows from the above theorem that 
$I_{\alpha}$ is smooth away from the origin.

\begin{corollary}
$I_{\alpha}(t)$ is a $C^{\infty}$ function on $\R\setminus\{0\}$.
\end{corollary}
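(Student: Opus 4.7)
The plan is to read the corollary off directly from Theorem~\ref{thm:1.1} and part~(2) of the remark that follows it, exploiting the fact that the integer $N$ in the expansion is arbitrary. The key observation is that on $\R\setminus\{0\}$ every power $t\mapsto t^{n/\alpha}$ and every term $t\mapsto t^{n/\alpha}\log|t|$ (with the chosen branch on $\R_-$) is $C^{\infty}$; the only summand in the expansion whose regularity is a priori limited is the remainder, and that limit is a function of $N$ which can be pushed arbitrarily high.

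First, I would fix $t_0\in\R_+$ and an arbitrary non-negative integer $k$, then choose $N\in\N$ with $\lceil (N+1)/\alpha\rceil-1\geq k$; this is possible precisely because $\alpha>0$. Theorem~\ref{thm:1.1} then presents $I_\alpha$ on $\R_+$ as a finite linear combination of the smooth functions above plus a remainder $\psi_N$ (or $\phi_N$ in the irrational case) of class $C^k$, so $I_\alpha$ is $C^k$ at $t_0$. Since $k$ and $t_0$ were arbitrary, $I_\alpha\in C^{\infty}(\R_+)$. For $t_0<0$, I would run the same argument verbatim using the companion expansion on $\R_-$ recorded in part~(2) of the remark above. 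Gluing the two half-lines gives $I_\alpha\in C^{\infty}(\R\setminus\{0\})$.

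There is essentially no obstacle to this plan beyond the elementary fact that $\lceil (N+1)/\alpha\rceil-1 \to \infty$ as $N\to\infty$ for every fixed $\alpha>0$. One might additionally want to emphasize that the argument is necessarily silent at $t=0$: Theorem~\ref{thm:1.1} produces genuine singular contributions such as $t^{1/\alpha}$ or $t^{1/\alpha}\log t$ there, so the best regularity one can hope for at the origin is the $C^{\lceil 1/\alpha\rceil-1}$ bound already mentioned in the introduction, and no further smoothness across $0$ can be extracted from this method.
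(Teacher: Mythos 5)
Your argument is correct and is exactly the one the paper intends: the paper gives no separate proof of the corollary, merely noting that it ``follows from the above theorem,'' and your proposal—letting $N\to\infty$ so that the remainder's regularity $\lceil (N+1)/\alpha\rceil-1$ exceeds any prescribed $k$, while the explicit terms $t^{n/\alpha}$ and $t^{n/\alpha}\log|t|$ are smooth away from $0$—is precisely the implicit reasoning, including the use of the remark for $t<0$.
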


The above property seems to be not so obvious, 
because large order derivatives of the integrand 
in (\ref{eqn:1.3}) with respect to $t$ are not
absolutely integrable. 

Let us focus on the behavior of $I_{\alpha}(t)$ near the origin. 
From the above theorem, 
we can give an asymptotic expansion of 
$I_{\alpha}(t)$ at the origin 
in the case where $\alpha$ is rational:
\begin{equation}\label{eqn:1.7}
I_{\alpha}(t) = 
\sum_{n=1}^{N} A_{n} t^{n/\alpha}+
\sum_{n=1}^{N} B_{n} t^n \log t+
\sum_{n=0}^{\lceil \frac{N+1}{\alpha}\rceil-2} 
C_{n} t^n+ 
O(t^{\lceil \frac{N+1}{\alpha}\rceil-1})
\quad \mbox{as $t\to +0$},
\end{equation}
for any positive integer $N$ with $N\geq \alpha$, 
where $A_n, B_n$ are the same as those  in (\ref{eqn:1.5})
and $C_n$ are constants depending on 
$\varphi, \alpha$ and, 
in particular,  
\begin{equation}\label{eqn:1.8}
C_n= i^n 
\int_0^{\infty}\frac{\varphi(x)}{x^{\alpha n}}dx \quad
\mbox{ for $n=0,\ldots, \lceil \frac{1}{\alpha} \rceil -1$,}
\end{equation}
which can be directly obtained by the Taylor formula. 
When $\varphi(0)>0$ and $\varphi\geq 0$ on $\R$, 
the improper integral in (\ref{eqn:1.8}) is convergent
if and only if $n\in\Z_+$ satisfies 
$n \leq \lceil \frac{1}{\alpha} \rceil -1$.
When $\alpha$ is not rational,  $I_{\alpha}(t)$
admits the following asymptotic expansion at the origin.
\begin{equation}\label{eqn:1.7}
I_{\alpha}(t) = 
\sum_{n=1}^{N} A_{n} t^{n/\alpha}+
\sum_{n=0}^{\lceil \frac{N+1}{\alpha}\rceil-2} 
C_{n} t^n+ 
O(t^{\lceil \frac{N+1}{\alpha}\rceil-1})\quad 
\mbox{as $t\to +0$},
\end{equation}
for any positive integer $N$ with $N\geq \alpha$, 
where $A_n$, $C_n$ are as in 
(\ref{eqn:1.5}), (\ref{eqn:1.8}).

In particular, the limit of $I_{\alpha}(t)$ as $t\to+0$ 
is $C_0=\int_0^{\infty}\varphi(x)dx$ as in (\ref{eqn:1.8}). 
In more detail, we can explicitly see  
the decay as $t\to +0$ of the function 
$\tilde{I}_{\alpha}(t):=I_{\alpha}(t)-C_0$,  
which is classified as in the three cases as follows. 

\begin{corollary}\label{cor:1.3}
\begin{enumerate}
\item If $0 < \alpha < 1$, then
$$
\lim_{t \to 0} \frac{\tilde{I}_{\alpha}(t)}{t} 
=  i  \int_0^{\infty} \frac{\varphi(x)}{x^\a}dx.
$$
\item If $\alpha=1$, then
$$
\lim_{t \to +0} \frac{\tilde{I}_{\alpha}(t)}{t \log t} 
= - i  \varphi(0). 
$$
\item If $1 < \alpha$, then
$$
\lim_{t \to +0}  \frac{\tilde{I}_{\alpha}(t)}{t^{1/\a}} 
=\frac{1}{\a} \varphi(0) e^{-\frac{1}{2\a}\pi  i }
\Gamma \left(-\frac{1}{\a} \right).
$$
\end{enumerate}
\end{corollary}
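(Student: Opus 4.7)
The plan is to apply the small-$t$ asymptotic expansion (1.7) (together with its non-rational counterpart), subtract the constant $C_0$, and isolate the unique leading term as $t\to+0$ in each of the three regimes. The only step that requires care is verifying, by comparing exponents of $t$ (with or without logarithmic factors), that every other term in (1.7) is $o$ of the designated leading term.

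For case (i), $0<\alpha<1$ implies $\lceil 1/\alpha\rceil-1\ge 1$, so $C_1= i\int_0^{\infty}\varphi(x)x^{-\alpha}\,dx$ is defined by (1.8). Taking $N=1$ (permissible since $N\ge\alpha$), the expansion of $\tilde I_\alpha(t)$ consists of $C_1 t$ together with $A_1 t^{1/\alpha}$, a possible $B_1 t^{1/\alpha}\log t$ in the rational case, higher $C_n t^n$ with $n\ge2$, and a remainder $O(t^{\lceil 2/\alpha\rceil-1})$; since $1/\alpha>1$, every such term is $o(t)$, which yields the claim. For case (ii), $\alpha=1$ makes every $n/\alpha$ an integer, so (1.5) forces $A_n=0$ for all $n$ and gives $B_1=-i\varphi(0)$. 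With $N=2$ the rational form of (1.7) becomes $\tilde I_\alpha(t)=B_1 t\log t+B_2 t^2\log t+C_1 t+O(t^2)$; each of $t^2\log t$, $t$, and $t^2$ is $o(t\log t)$ as $t\to+0$, so dividing by $t\log t$ gives $-i\varphi(0)$.

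For case (iii), $\alpha>1$ places $1/\alpha$ in $(0,1)\setminus\N$, so by (1.5) one has $A_1=\alpha^{-1}e^{-\pi i/(2\alpha)}\varphi(0)\Gamma(-1/\alpha)$. Taking $N=\lceil\alpha\rceil$ in (1.7), every other term carries an exponent strictly greater than $1/\alpha$: the $A_n$-exponents $n/\alpha$ for $n\ge2$; the integer exponents $n\ge 1$ in the $B_n$- and $C_n$-sums, which exceed $1/\alpha$ because $\alpha>1$; and the remainder order $\lceil(N+1)/\alpha\rceil-1\ge 1$, which likewise exceeds $1/\alpha$. All such terms are therefore $o(t^{1/\alpha})$, and dividing $\tilde I_\alpha(t)$ by $t^{1/\alpha}$ yields $A_1$. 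The only genuine obstacle is this exponent bookkeeping in case (iii); cases (i) and (ii) amount to direct reads of (1.7) once $C_0$ is absorbed into $\tilde I_\alpha$.
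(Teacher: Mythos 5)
Your proposal is correct and follows the paper's own route: the corollary is obtained by reading off the unique leading term of the small-$t$ expansion (1.7) (and its irrational-$\alpha$ analogue) after subtracting $C_0$, exactly as you do, and your exponent bookkeeping in each of the three regimes is accurate. The only point left implicit is that case (i) asserts a two-sided limit, so for $t\to-0$ one should also invoke the $t\le 0$ expansions of Remarks (1)--(2), where the same comparison of exponents (now with $|t|^{1/\alpha}=o(|t|)$ and $t^{m}\log|t|=o(t)$ for $m\ge 2$) goes through verbatim.
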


\begin{remark}
The behavior of $I_{\alpha}(t)$ for large $t$ 
is obvious in the sense:
\begin{equation}\label{eqn:1.10}
I_{\alpha}(t)=O(t^{-N})\quad
\mbox{ as $t\to\infty$,} 
\end{equation}
for any positive integer $N$. 
This can be easily shown as follows. 
By changing the integral variable
 as $y=x^{-\alpha}$, $I_{\alpha}(t)$
can be expressed as
\begin{equation*}
I_{\alpha}(t)
=\frac{1}{\alpha} \int_0^{\infty}
e^{ i ty}\varphi(y^{-1/\alpha})y^{-1/\alpha-1}dy.
\end{equation*}
Repeated integrations of parts imply
\begin{equation*}
I_{\alpha}(t)
=\left(\frac{-1}{ i  t}\right)^N
\int_0^{\infty}
e^{ i ty}\varphi_N(y^{-1/\alpha})y^{-1/\alpha-N-1}dy,
\end{equation*} 
for any positive integer $N$, where
$\varphi_N$ is a $C^{\infty}$ function on $\R$
whose support is contained in that of $\varphi$.
The equation (\ref{eqn:1.10}) 
follows from the above equation. 
\end{remark}

\begin{remark}
Let us consider integrals of the form:
\begin{equation}\label{eqn:1.11}
	\hat{I}(t)=
\int_{0}^{\infty} e^{ i t /f(x)} \varphi(x)dx
	\quad\,\,\, \text{for $t \in \R$,}
\end{equation}
where $f,\varphi$ are as in (\ref{eqn:1.1}) and, 
moreover, $f$ satisfies 
$f(0)=f'(0)=\cdots=f^{(k-1)}(0)=0$ and 
$f^{(k)}(0)\neq 0$ with $k\geq 1$.  
If the support of $\varphi$ is sufficiently small, then
$\hat{I}(t)$ essentially satisfies the property (i) in Theorem~1.1
with $\alpha=k$. 
(The coefficients $A_j, B_j$ are slightly different from those 
in (\ref{eqn:1.5}).)
This can be easily shown 
by using the implicit function theorem.
\end{remark}

\medskip

{\it Notation and symbols.}

\begin{itemize}
\item 
We denote 
$
\R_{\pm}:=\{x\in\R:\pm x\geq 0\}
$ and 
$
\Z_{+}:=\{n\in\Z: n \geq 0\}.
$
\item
When $n \leq a < n+1$ where $n\in\Z$, 
set $\lfloor a \rfloor =n$.
When $n-1< a\leq n$ where $n\in\Z$, 
set $\lceil a \rceil =n$.
\item
For a  $C^{n}$ function $f$, 
the $k$-th derivative of $f$ is denoted by 
$f^{(k)}$ for $k=1,\ldots, n$.
\item
Let $f(t),g(t)$ be functions defined 
on an interval $I\subset\R$.
We write 
$f(t)\equiv g(t)$ mod $C^{\infty}(I)$ to
express that $f(t)-g(t)$ is a $C^{\infty}$ function on $I$.
\end{itemize}


\section{Some Fourier Transforms}

To prove Theorem~1.1,  
we prepare some auxiliary lemmas concerning 
the Fourier transform of some functions.
Let $p$ be a positive real number and
let $\chi:\R\to\R$ be a $C^{\infty}$ function satisfying that
\begin{eqnarray}\label{eqn:2.1}
		\chi(x) = \left \{ \begin{array}{ll}
		1	& \mbox{if $x \ge M$,}\\
		0      & \mbox{if $x \le L$,}\\
		\end{array} \right.
\end{eqnarray}
where $L,M$ are positive constants with $L < M$. 
For $t\in\R$, let $F_p(t)$ be the integral defined by
\begin{equation}\label{eqn:2.2}
	F_p(t)=\int_0^{\infty}e^{ i tx}x^{-p-1}\chi(x)dx.
\end{equation}
Note that this integral is the Fourier transform of 
the function $x\mapsto x^{-p-1}\chi(x)$ and that 
the integral in (\ref{eqn:2.2}) absolutely converges. 


The following lemma plays crucial roles in the computation below.

\begin{lemma}\label{lem:2.1}
If $0 < \alpha < 1$, then 
\begin{equation}\label{eqn:2.3}
\int_0^{\infty} e^{ i y} y^{\alpha-1}dy
=
e^{\frac{\alpha}{2}\pi  i } \Gamma(\alpha). 
\end{equation}
\end{lemma}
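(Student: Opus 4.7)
The strategy is contour deformation. The integrand $e^{iy}y^{\alpha-1}$ oscillates but the integral is only conditionally convergent, so rotating the contour of integration by $\pi/2$ should transform $e^{iy}$ into the decaying exponential $e^{-y}$, producing a Gamma integral. The constraint $0<\alpha<1$ is exactly what is needed to make both ends of this rotation behave properly.

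Concretely, I would consider the holomorphic function $f(z)=e^{iz}z^{\alpha-1}$ on the open first quadrant (with the principal branch of $z^{\alpha-1}$, i.e., $\arg z\in(-\pi,\pi]$), and apply Cauchy's theorem to the quarter-annulus contour consisting of the segment $[\varepsilon,R]$ on the positive real axis, the quarter arc $\Gamma_R=\{Re^{i\theta}:\theta\in[0,\pi/2]\}$, the segment $[iR,i\varepsilon]$ on the imaginary axis, and the small quarter arc $\gamma_\varepsilon=\{\varepsilon e^{i\theta}:\theta\in[0,\pi/2]\}$ traversed clockwise. Since $f$ is holomorphic inside, the four contour integrals sum to zero. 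It then remains to show the two arc contributions vanish in the limit and to evaluate the vertical segment's contribution.

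The main technical point is the estimate on the large arc $\Gamma_R$. Parametrising $z=Re^{i\theta}$, one has $|e^{iz}|=e^{-R\sin\theta}$ and $|z^{\alpha-1}|=R^{\alpha-1}$, so Jordan's inequality $\sin\theta\ge 2\theta/\pi$ on $[0,\pi/2]$ gives
\begin{equation*}
\left|\int_{\Gamma_R}f(z)\,dz\right|\le R^{\alpha}\int_0^{\pi/2}e^{-R\sin\theta}\,d\theta\le R^{\alpha}\cdot\frac{\pi}{2R}(1-e^{-R}),
\end{equation*}
which tends to $0$ as $R\to\infty$ precisely because $\alpha<1$. The small arc is easier: $|f(z)|\le e^{\varepsilon}\varepsilon^{\alpha-1}$ on $\gamma_\varepsilon$ and its length is $\pi\varepsilon/2$, so the integral is $O(\varepsilon^{\alpha})\to 0$ as $\varepsilon\to 0$, using $\alpha>0$.

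Finally, on the vertical segment I would substitute $z=iy$ with $y\in[0,\infty)$, giving $dz=i\,dy$, $e^{iz}=e^{-y}$, and $z^{\alpha-1}=e^{i\pi(\alpha-1)/2}y^{\alpha-1}$. Cauchy's theorem then yields
\begin{equation*}
\int_0^{\infty}e^{iy}y^{\alpha-1}\,dy=i\,e^{i\pi(\alpha-1)/2}\int_0^{\infty}e^{-y}y^{\alpha-1}\,dy=e^{i\pi\alpha/2}\Gamma(\alpha),
\end{equation*}
which is exactly \eqref{eqn:2.3}. I expect the Jordan-type bound on the large arc to be the only nontrivial step; the rest is bookkeeping.
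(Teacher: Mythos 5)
Your proposal is correct and follows essentially the same route as the paper's own proof (Proposition~4.1 in Section~4): the same quarter-circle contour in the first quadrant, the same Jordan-inequality bound $\frac{\pi}{2}R^{\alpha-1}(1-e^{-R})$ on the large arc using $\alpha<1$, the same $O(\varepsilon^{\alpha})$ estimate on the small arc using $\alpha>0$, and the same evaluation of the vertical segment yielding $e^{\frac{\alpha}{2}\pi i}\Gamma(\alpha)$.
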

The above integral in the lemma may be considered 
as a generalized Fresnel integral and its value can be 
explicitly computed by using an elementary method 
of complex analysis. 
Indeed, its complete proof has been given in 
\cite{Nag20}, \cite{NaM20}, \cite{NaM22}, 
\cite{Tak21}, etc.  
Since the proof itself is essential to the analysis 
of this paper, 
we will recall its proof in Section~4. 

\medskip

Let us consider the property of $F_p(t)$ near the origin,
which depends on whether $p$ is an integer or not.

\begin{lemma}\label{lem:2.2}
If $p>-1$ is not an integer, 
then 
$F_p(t)\equiv \tilde{A}_p t^{p}$ mod $C^{\infty}(\R)$,
where 
$\tilde{A}_p = e^{\frac{-p}{2}\pi  i }\Gamma(-p).$ 
Here, the branch of $t^p$ for $t<0$ is chosen as 
$t^p=e^{\pi p  i }|t|^p$.
\end{lemma}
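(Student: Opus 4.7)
The plan is to establish the lemma first for $-1 < p < 0$ by a direct change of variables combined with Lemma~\ref{lem:2.1}, and then to extend to every non-integer $p > -1$ by induction on $\lfloor p \rfloor$ via an integration-by-parts recurrence relating $F_p$ to $F_{p-1}$. The statement on $t < 0$ will follow from the conjugation symmetry $F_p(-s) = \overline{F_p(s)}$ for $s>0$ together with the prescribed branch $(-s)^p = e^{p\pi i} s^p$.

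For the base case $-1 < p < 0$ with $t > 0$, I substitute $y = tx$ to rewrite
\begin{equation*}
F_p(t) = t^p \int_0^\infty e^{iy}\, y^{-p-1}\,\chi(y/t)\,dy,
\end{equation*}
and decompose $\chi(y/t) = 1 - (1-\chi(y/t))$. The first piece is $t^p\int_0^\infty e^{iy} y^{-p-1}\,dy$, which Lemma~\ref{lem:2.1} with $\alpha = -p \in (0,1)$ evaluates to $\tilde{A}_p\, t^p$. Unwinding the substitution in the second piece yields $-\int_0^M e^{itx} x^{-p-1}(1-\chi(x))\,dx$, a Fourier transform of an $L^1([0,M])$ function; differentiating $k$ times in $t$ under the integral sign brings down $i^k x^k$, and $x^{k-p-1}$ is absolutely integrable on $[0,M]$ for every $k \ge 0$ because $k - p - 1 > -1$ when $-1 < p < 0$. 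Hence this correction belongs to $C^\infty(\R)$. For $t < 0$, the integrand of $F_p$ being real gives $F_p(-s) = \overline{F_p(s)}$, and a direct computation shows $\overline{\tilde{A}_p}\, s^p = \tilde{A}_p\cdot(-s)^p$ under the stated branch, so both the singular term and the smooth correction extend compatibly across $t = 0$.

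For the inductive step to non-integer $p > 0$, integrating $\tfrac{d}{dx}\bigl(e^{itx} x^{-p}\chi(x)\bigr)$ from $0$ to $\infty$ produces
\begin{equation*}
F_p(t) = \frac{it}{p}\, F_{p-1}(t) + \frac{1}{p}\int_0^\infty e^{itx} x^{-p}\chi'(x)\,dx,
\end{equation*}
where both boundary terms vanish because $\chi \equiv 0$ near $0$ and $x^{-p} \to 0$ at infinity when $p > 0$, and the last integral is $C^\infty$ in $t$ because $\chi'$ is smooth and supported in $[L,M]$. Combining this with the inductive hypothesis $F_{p-1} \equiv \tilde{A}_{p-1}\, t^{p-1}$ mod $C^\infty(\R)$ gives $F_p \equiv \tfrac{i}{p}\tilde{A}_{p-1}\, t^p$ mod $C^\infty(\R)$, and the identity $\tfrac{i}{p}\tilde{A}_{p-1} = \tilde{A}_p$ follows from $e^{\pi i/2} = i$ together with the functional equation $\Gamma(1-p) = -p\,\Gamma(-p)$.

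The main obstacle I anticipate is keeping the phase factor and the branch consistent through the induction: the algebraic verification that $\tfrac{i}{p}\tilde{A}_{p-1} = \tilde{A}_p$ is precisely what forces the explicit exponential $e^{-p\pi i/2}$ to appear in $\tilde{A}_p$, and the branch choice $(-s)^p = e^{p\pi i}|s|^p$ for $t<0$ is precisely the one that glues the conjugation formula to the $t>0$ formula. A subsidiary technical point is that in the base case the oscillatory integral is not absolutely convergent at infinity, so the substitution $y=tx$ and the splitting by $\chi$ must be read as limits of compactly truncated integrals.
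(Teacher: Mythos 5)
Your proposal is correct and follows essentially the same route as the paper: the base case $-1<p<0$ is handled by splitting off the compactly supported piece $x^{-p-1}(1-\chi(x))$ (which gives the $C^\infty$ correction) and evaluating the remaining integral by rescaling and Lemma~\ref{lem:2.1}, and the extension to general non-integer $p>-1$ uses the identical integration-by-parts recurrence $F_p(t)=\frac{it}{p}F_{p-1}(t)+C^\infty$ together with $\Gamma(1-p)=-p\,\Gamma(-p)$. The only cosmetic difference is that you treat $t<0$ via the conjugation symmetry $F_p(-s)=\overline{F_p(s)}$, whereas the paper redoes the change of variables directly; both are consistent with the stated branch.
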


\begin{proof}
First, we consider the case where $p\in(-1,0)$. 
By using $\chi(x)$ in (\ref{eqn:2.1}), 
$F_p(t)$ is divided into two parts as follow.
\begin{eqnarray}
&&F_p(t)=\int_0^{\infty} e^{ i tx} x^{-p-1} \chi(x) dx 
\nonumber \\
&&\quad\quad = \int_0^{\infty} e^{ i tx} x^{-p-1} dx
- \int_0^{\infty} e^{ i tx} x^{-p-1} (1-\chi(x)) dx
\quad \mbox{ for $t\in\R$.}
\label{eqn:2.4}
\end{eqnarray}
Since the support of $(1-\chi(x))$ is compact,
the second integral in (\ref{eqn:2.4}) is a $C^{\infty}$ function 
of $t$ on $\R$. 
When $t>0$,
by the change of the integral variable, 
the first integral in (\ref{eqn:2.4}) can be written as
\begin{equation}\label{eqn:2.5}
\int_0^{\infty} e^{ i tx} x^{-p-1} dx
= t^{p} \int_0^{\infty} e^{ i y} y^{-p-1} dy 
= t^{p} e^{\frac{-p}{2}\pi  i }\Gamma(-p)
\end{equation}
by using Lemma~2.1. 
When $t<0$, by choosing the branch of $t^p$ as in the lemma, 
\begin{equation}\label{eqn:}
\int_0^{\infty} e^{ i tx} x^{-p-1} dx
= |t|^{p} \int_0^{\infty} e^{- i y} y^{-p-1} dy 
= |t|^{p} e^{\frac{p}{2}\pi  i }\Gamma(-p)
= t^{p} e^{\frac{-p}{2}\pi  i }\Gamma(-p). 
\end{equation}
Therefore, the equation can be obtained 
in the lemma for $p\in(-1,0)$. 


Next, let us consider the case where 
$p\in\R_+\setminus\Z$ with $p>1$.
Since $\chi(x)=0$ for $x \le L$,
an integration by parts implies 
\begin{eqnarray}
&&
F_p(t)=\int_0^{\infty} e^{ i tx} x^{-p-1}\chi(x)dx 
\nonumber\\
&&\quad \,\,=\frac{-1}{p}
\left[ x^{-p}e^{ i tx}\chi(x) \right]_0^{\infty}
+\frac{1}{p}
\int_0^{\infty}x^{-p}(e^{ i  tx}\chi(x))'dx
\nonumber\\
&&\quad \,\,= \frac{ i t}{p}
\int_0^{\infty}e^{ i tx}x^{-p}\chi(x)dx+
\frac{1}{p}
\int_0^{\infty}e^{ i tx}x^{-p}\chi'(x)dx
\quad \mbox{for $t\in\R$.}
\label{eqn:2.6}
\end{eqnarray}
Note that the second integral in (\ref{eqn:2.6}) 
is a $C^{\infty}$ function of $t$ on $\R$.
Since there uniquely exist $m \in \N$ and $q\in (-1,0)$ 
such that $p = q + m$, the repeated process as the above implies
	\begin{equation}\label{eqn:2.7}
\begin{split}
F_p(t)\equiv \frac{( i t)^m}{p(p-1)\cdots (q+1)}
\int_0^{\infty}e^{ i tx}x^{-q-1}\chi(x)dx
\quad \mbox{mod $C^{\infty}(\R)$.}
\end{split}
\end{equation}
Since $q\in(-1,0)$, we can apply (\ref{eqn:2.5}) 
to (\ref{eqn:2.7}) and, 
as a result, we see that
$F_p(t)\equiv \tilde{A}_p t^{p}$ mod $C^{\infty}(\R)$, where
$$
\tilde{A}_p = 
\frac{ i ^m}{p(p-1)\cdots (q+1)}\tilde{A}_{q} 
=
\frac{(- i )^m}{(-p)(-p+1)\cdots(-q-1)}
e^{-\frac{q}{2}\pi  i } \Gamma(-q). 
$$
Using the equality $\Gamma(\alpha+1)=\alpha\Gamma(\alpha)$
for $\alpha\in\R\setminus\Z$, 
we can obtain the equation in the lemma
for $p\in\R_+\setminus\Z$.
\end{proof}

On the other hand, when $p$ is an integer, 
a logarithmic function appears in the singularity
of $F_p(t)$.

\begin{lemma}\label{lemma:2.3}
If $p$ is a nonnegative integer,	
then 
$F_p(t)\equiv \tilde{B}_p t^{p}\log t$
mod $C^{\infty}(\R_+)$, where
$
\tilde{B}_p = - i^{p}/p!. 
$
(In particular, 
$F_0(t)\equiv -\log t$ mod 
$C^{\infty}(\R_+)$.)
\end{lemma}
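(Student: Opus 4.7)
The plan is to first dispose of the base case $p=0$ by a direct computation, and then reduce $p\geq 1$ to it by the same integration by parts used in the proof of Lemma~\ref{lem:2.2}.

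For $p=0$, I would use (\ref{eqn:2.1}) to split
$$
F_0(t)=\int_L^M e^{itx}x^{-1}\chi(x)\,dx+\int_M^{\infty}\frac{e^{itx}}{x}\,dx.
$$
The first integral is entire in $t$ and so contributes only to the $C^{\infty}(\R_+)$ error. For $t>0$, the substitution $y=tx$ turns the second integral into $\int_{Mt}^{\infty} e^{iy}/y\,dy$. Writing, for $s>0$,
$$
\int_s^{\infty}\frac{e^{iy}}{y}\,dy=\int_s^{1}\frac{e^{iy}-1}{y}\,dy-\log s+\int_1^{\infty}\frac{e^{iy}}{y}\,dy
$$
and observing that $y\mapsto (e^{iy}-1)/y$ extends to an entire function, the remainder
$$
g(s):=\int_s^{1}\frac{e^{iy}-1}{y}\,dy+\int_1^{\infty}\frac{e^{iy}}{y}\,dy
$$
is $C^{\infty}$ on $[0,\infty)$. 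Setting $s=Mt$ and absorbing the constant $\log M$, I obtain $F_0(t)\equiv -\log t$ mod $C^{\infty}(\R_+)$.

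For $p\geq 1$, the integration by parts displayed in (\ref{eqn:2.6}) remains valid: the boundary term at $0$ vanishes because $\chi(0)=0$, the boundary term at infinity vanishes because $x^{-p}\to 0$, and the $\chi'$-term is $C^{\infty}$ in $t$ since $\chi'$ has compact support. This gives
$$
F_p(t)\equiv \frac{it}{p}F_{p-1}(t)\quad\text{mod }C^{\infty}(\R),
$$
and iterating down to $p=0$ yields $F_p\equiv \frac{(it)^p}{p!}F_0$ mod $C^{\infty}(\R)$. Plugging in the base case and using that a polynomial in $t$ times a $C^{\infty}(\R_+)$ function is again in $C^{\infty}(\R_+)$, I conclude
$$
F_p(t)\equiv -\frac{i^p}{p!}\,t^p\log t\quad\text{mod }C^{\infty}(\R_+),
$$
which is exactly the claim with $\tilde{B}_p=-i^p/p!$.

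The main obstacle is the base case, specifically verifying that $g(s)$ extends $C^{\infty}$-smoothly to $s=0$; this ultimately rests on the entirety of $(e^{iy}-1)/y$, so that successive differentiation under the integral sign produces analytic integrands with no trouble at $0$. Once this is in place, the reduction to Lemma~\ref{lem:2.2}'s integration by parts is purely formal, and the resulting coefficient $-i^p/p!$ drops out from the iterated factor $(it)^p/p!$.
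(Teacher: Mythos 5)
Your argument is correct, and for $p\geq 1$ it takes the same route as the paper: the paper also reduces to the case $p=0$ by iterating the integration by parts of Lemma~2.2, so the recursion $F_p\equiv \frac{it}{p}F_{p-1}$ and the resulting factor $(it)^p/p!$ are exactly what is intended there. The difference lies in the base case. The paper splits $F_0$ at the $t$-dependent point $x=1/t$ into three pieces $G,H,K$: the logarithm is extracted from the non-oscillatory piece $G(t)=\int_L^{1/t}x^{-1}\chi(x)\,dx$, the piece involving $(e^{itx}-1)x^{-1}$ on $[L,1/t]$ is shown to be smooth via its power series and an explicit antiderivative, and the tail over $[1/t,\infty)$ is rescaled to $\int_1^{\infty}e^{iy}y^{-1}\chi(y/t)\,dy$. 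You instead split at the fixed point $M$ where $\chi$ becomes identically $1$, rescale the tail to $\int_{Mt}^{\infty}e^{iy}y^{-1}\,dy$, and invoke the classical small-argument expansion $\int_s^{\infty}e^{iy}y^{-1}\,dy=-\log s+g(s)$ with $g$ smooth at $s=0$. Both proofs ultimately rest on the same fact, that $(e^{iy}-1)/y$ extends to an entire function; yours packages the computation into one standard identity for the exponential integral and is somewhat shorter, while the paper's three-way splitting keeps each intermediate step elementary and self-contained. One point worth making explicit in your write-up: the tail $\int_M^{\infty}e^{itx}x^{-1}\,dx$, and hence $\int_s^{\infty}e^{iy}y^{-1}\,dy$, is only conditionally convergent, so the substitution $y=tx$ and the rearrangement defining $g$ should be performed on $\int_M^R$ first and then $R\to\infty$; this is routine but should be said.
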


\begin{proof}
It is sufficient to show the lemma in the case of $p=0$. 
Indeed, 
we can easily deal with the general case
in a similar argument to that in Lemma~2.2.

The integral $F_0(t)$ can be expressed as follows. 
\begin{equation*}
\begin{split}
F_0(t) 
&= \int_L^{1/t} e^{ i  t x}x^{-1}\chi(x)dx
+ \int_{1/t}^{\infty} e^{ i  t x}x^{-1}
\chi(x)dx \\
&=\int_L^{1/t} x^{-1}\chi(x)dx
+ \int_L^{1/t} (e^{ i  t x}-1)x^{-1}\chi(x)dx
+ \int_{1/t}^{\infty} e^{ i  t x}x^{-1}
\chi(x)dx \\
&=: G(t) + H(t) +K(t)
\quad \mbox{ for $t\in\R_+$.}
\end{split}
\end{equation*}

First, let us consider the integral $G(t)$. 
By integration by parts,
	\begin{equation*}
	\begin{split}
		G(t) 
		&= -\log t\cdot \chi(1/t)
		- \int_L^{1/t} \log x \cdot \chi'(x)dx \\
		&\equiv -\log t  \quad 
		\mbox{ mod $C^{\infty}(\R_+)$}	
	\end{split}
	\end{equation*}

Second, let us show that $H(t)$ is a $C^{\infty}$ function 
on $\R$.
Let 
$$D:=\{(x,t)\in\R^2: tx<1,\,\, x>0, \,\, t\geq 0 \}.$$
The two $C^{\infty}$ functions 
$g,h:D \to \C$ can be defined by the convergent series
as follows:
\begin{equation*}
g(x,t) 
= \sum_{n=1}^{\infty}\frac{ i^n}{n!}t^n x^{n-1},\quad
h(x,t) 
= \sum_{n=1}^{\infty}\frac{ i^n}{nn!}t^n x^n.
\end{equation*}
Noticing that $g(x,t)=(e^{ i  xt} -1)x^{-1}$ and 
$\frac{\partial}{\partial x}h(x,t)=g(x,t)$ on $D$, 
we have
\begin{equation}\label{eqn:2.8}
H(t)=\int_L^{\infty} g(x,t)\chi(x)dx
= h(1/t,t)
- \int_L^M h(x,t)\chi'(x)dx
\quad \mbox{ for $t\in\R_+$},
\end{equation}
by integration by parts.
It is easy to see that
$h(1/t,t)$ is a constant and that 
the last integral in (\ref{eqn:2.8}) is 
a $C^{\infty}$ function of $t$ on $\R_+$. 

Third, let us consider the integral $K(t)$. 
By changing the integral variable,
we have
\begin{eqnarray}
&&K(t)
=\int_{1/t}^{\infty} e^{ i  tx}x^{-1}\chi(x)dx
=\int_{1}^{\infty}e^{ i  y}y^{-1}\chi(y/t)dy 
\nonumber\\
&&\quad\quad=\int_{1}^{\infty}e^{ i  y}y^{-1}dy-
\int_{1}^{\infty}e^{ i  y}y^{-1}(1-\chi(y/t))dy
\quad \mbox{ for $t\in\R_+$.}
\label{eqn:2.9}
\end{eqnarray}
The first integral in (\ref{eqn:2.9}) is a constant 
defined by a convergent improper integral.
It is easy to see that the second integral in (\ref{eqn:2.9}) is 
a $C^{\infty}$ function of $t$ on $\R_+$.  

Putting together the above results, 
we can see that 
$
F_0(t)+\log t
$
is a $C^{\infty}$ function on $\R_+$.

\begin{remark}
For $t\in\R_-$, the same result in Lemma~2.3
can be obtained. 
Indeed, 
if $p$ is a nonnegative integer,	
then 
$F_p(t)\equiv \tilde{B}_p t^{p}\log |t|$
mod $C^{\infty}(\R_-)$, where
$\tilde{B}_p$ is as in the lemma.
(In particular, 
$F_0(t)\equiv -\log |t|$ mod 
$C^{\infty}(\R_-)$.)
However, we do not know whether 
$F_p(t)-\tilde{B}_p t^{p}\log |t|$
is a $C^{\infty}$ function on $\R$
or not. 
\end{remark}

\end{proof}


\section{Proof of Theorem~1.1}

Let $R$ be a positive number such that 
the support of $\varphi$ is contained in $[-R, R]$.

By changing the integral variable, we have 
$$
I_{\alpha}(t)
= \frac{1}{\a}
\int_0^{\infty} 
e^{ i  ty}\varphi(y^{-1/\a}) y^{-1/\a-1}dy
\quad \mbox{ for $t\in\R$.}
$$
By using $\chi$ in (\ref{eqn:2.1}) with $1/R^{\alpha}<L$, 
we divide the above integral as follows.
\begin{equation*}
I_{\alpha}(t)= J(t) + E(t),
\end{equation*}
with
\begin{eqnarray}
&&J(t) 
= \frac{1}{\a} \int_0^{\infty} 
e^{ i  ty}\varphi(y^{-1/\a}) y^{-1/\a-1}\chi(y) dy,
\label{eqn:3.1}\\
&&E(t) 
= \frac{1}{\a} \int_0^{\infty} 
e^{ i  ty}\varphi(y^{-1/\a}) y^{-1/\a-1}(1-\chi(y))dy.
\nonumber 
\end{eqnarray}
Since it is easy to see that $E(t)$ 
is a $C^{\infty}$ function on $\R$, 
it suffices to consider the property of $J(t)$. 

The Taylor formula implies that
for any $N\in\N$, 
\begin{equation}\label{eqn:3.2}
\varphi(x)= \sum_{n=0}^{N-1} a_n x^n
	+ x^{N}\rho_N(x)
\end{equation}
where $a_n:=\varphi^{(n)}(0)/n!$ for $n\in\Z_+$ and 
$\rho_N(x)$ is a $C^{\infty}$ function on $\R$.


Substituting (\ref{eqn:3.2}) into (\ref{eqn:3.1}), 
we have
\begin{equation}\label{eqn:3.3}
J(t) = 
\sum_{n=0}^{N-1} 
\frac{a_n}{\a} F_{\gamma_n}(t)
+ R_N(t),
\end{equation}
where 
$F_{\gamma_n}(t)$ is as in (\ref{eqn:2.2}) with 
$\gamma_n:=(n+1)/\alpha$ for $n\in\Z_+$ and 
$$	
R_N(t)
=\frac{1}{\a}\int_0^{\infty}
e^{ i  ty}y^{-\frac{N+1}{\a}-1}\rho_N(y^{-1/\a})\chi(y)dy.
$$	
Since the function 
$y \mapsto \rho_N(y^{-1/\alpha})\chi(y)$ is
bounded on $\R$, 
it is easy to see that $R_N(t)$ is a 
$C^{\lceil \frac{N+1}{\alpha}\rceil-1}$ function on $\R$. 
We will consider the first term in (\ref{eqn:3.3}).

\medskip 

(i)\quad 
First, let us consider the case where 
$\alpha$ is a rational number. 

By applying Lemmas~2.2 and 2.3,
we see that the first term in (\ref{eqn:3.3}) 
can be expressed as
\begin{equation}\label{eqn:3.4}
\sum_{n=0}^N 
\frac{a_n}{\a} F_{\gamma_n}(t)\equiv 
P_N(t)+Q_N(t) \quad \mbox{mod $C^{\infty}(\R_+)$,}
\end{equation}
with
\begin{equation*}\label{eqn:}
P_N(t)=	
\sum_{n\in {S}_N} 
\frac{a_n}{\a}\tilde{A}_{\gamma_n}t^{\gamma_n}, \quad
Q_N(t)=
\sum_{j\in T_N}
\frac{a_n}{\a} \tilde{B}_{\gamma_n}t^{\gamma_n} 
\log t, 
\end{equation*}
where 
$
S_N:=\{n\in\Z_+:\gamma_n\not\in\Z,\,\, n\leq N-1\}
$
and 
$
T_N:=\{n\in\Z_+:\gamma_n\in\Z,\,\, n\leq N-1\}.
$

Putting (\ref{eqn:3.3}),  (\ref{eqn:3.4}) 
and the above regularities of $R_N(t)$ together, 
we can easily show (i) in the theorem. 

\medskip

(ii)\quad 
Since the case where $\alpha$ is not rational 
can be more easily dealt with 
and the equation (\ref{eqn:1.6}) in the theorem 
can be obtained in a similar fashion 
to the case of (i), 
the proof will be left to the readers.


\section{Generalized Fresnel integrals}

In this section,
we compute the exact values of the improper integrals
\begin{equation}\label{eqn:4.1}
\int_0^{\infty}e^{\pm  i  x^{\frac{1}{\alpha}}}dx
=\lim_{R\to\infty}
\int_0^{R}e^{\pm  i  x^{\frac{1}{\alpha}}}dx,
\end{equation}
where $0<\alpha<1$, which gives a proof of Lemma~2.1.
When $\alpha=1/2$, the above integrals can be explicitly 
computed as 
\begin{equation*}
\int_0^{\infty}e^{\pm  i  x^{2}}dx=
\frac{\sqrt{\pi}}{2\sqrt{2}}(1\pm i )=
\frac{\sqrt{\pi}}{2} e^{\pm\frac{\pi}{4} i }
\end{equation*}
by using an elementary method in complex analysis. 
These equations imply 
\begin{equation*}
\int_{0}^{\infty} \sin(x^2)dx=
\int_0^{\infty}\cos(x^2)dx=
\frac{\sqrt{\pi}}{2\sqrt{2}}. 
\end{equation*}  
The above two integrals are called 
the Fresnel integrals.  
The integral (\ref{eqn:4.1}) can also be explicitly computed 
in a similar fashion to the case of the Fresnel integrals
(see \cite{Nag20}, \cite{NaM20}, \cite{NaM22}, 
\cite{Tak21}, etc.).
Since this computation is essential to our analysis, 
we will give an exact proof of the following proposition. 

\begin{proposition}
\begin{equation}\label{eqn:4.2}
\int_0^{\infty}e^{\pm  i  x^{\frac{1}{\a}}}dx
=\alpha\int_0^{\infty} e^{\pm  i  y} y^{\alpha-1}dy
=e^{\pm\frac{\alpha}{2}\pi i }
\Gamma(\alpha+1),
\end{equation}
where $0<\a<1$. 
\end{proposition}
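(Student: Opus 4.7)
The plan is to prove both equalities in turn, using a change of variables for the first and a contour-integration argument for the second. For the first equality, I would substitute $y=x^{1/\alpha}$ in $\int_0^R e^{\pm i x^{1/\alpha}}\,dx$, giving $dx=\alpha y^{\alpha-1}\,dy$, and then pass to the limit $R\to\infty$; the improper integrals converge by the oscillation argument that will be justified below.

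For the main identity $\int_0^\infty e^{\pm i y} y^{\alpha-1}\,dy = e^{\pm \frac{\alpha}{2}\pi i}\Gamma(\alpha)$, I would apply Cauchy's theorem to $f(z)=e^{i z}z^{\alpha-1}$ (treating the $-$ case by complex conjugation, or equivalently by using the mirror contour in the fourth quadrant) along the positively oriented boundary of the region $\{z=re^{i\theta}:\varepsilon\le r\le R,\ 0\le\theta\le \pi/2\}$. The branch of $z^{\alpha-1}$ is the principal one, which is holomorphic in an open set containing the closed quarter-annulus, so $\oint f(z)\,dz=0$. On the positive imaginary segment $z=iy$ (traversed downward from $iR$ to $i\varepsilon$) one has $(iy)^{\alpha-1}=e^{i\pi(\alpha-1)/2}y^{\alpha-1}$ and $e^{iz}=e^{-y}$, and assembling the factor $i\,dy$ from $dz$ one computes the multiplier $-i\,e^{i\pi(\alpha-1)/2}=-e^{i\pi\alpha/2}$ in front of $\int_\varepsilon^R e^{-y}y^{\alpha-1}\,dy$.

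The main technical step is controlling the two circular arcs. On the large arc $z=Re^{i\theta}$, $\theta\in[0,\pi/2]$, the bound $|e^{iz}|=e^{-R\sin\theta}$ together with the Jordan inequality $\sin\theta\ge 2\theta/\pi$ gives
\begin{equation*}
\left|\int_{\text{arc}_R} f(z)\,dz\right|\le R^{\alpha}\int_0^{\pi/2} e^{-R\sin\theta}\,d\theta \le \frac{\pi}{2}\,R^{\alpha-1},
\end{equation*}
which tends to $0$ as $R\to\infty$ precisely because $\alpha<1$. On the small arc $z=\varepsilon e^{i\theta}$ the integrand is $O(\varepsilon^{\alpha-1})$ and the arc length is $O(\varepsilon)$, so the contribution is $O(\varepsilon^{\alpha})\to 0$ as $\varepsilon\to 0$ since $\alpha>0$. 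The vanishing of the two arcs also retroactively justifies convergence of $\int_0^\infty e^{iy}y^{\alpha-1}\,dy$ as an improper Riemann integral.

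Combining these limits with Cauchy's theorem yields
\begin{equation*}
\int_0^\infty e^{iy}y^{\alpha-1}\,dy = e^{i\pi\alpha/2}\int_0^\infty e^{-y}y^{\alpha-1}\,dy = e^{i\pi\alpha/2}\Gamma(\alpha),
\end{equation*}
and multiplication by $\alpha$, together with $\alpha\Gamma(\alpha)=\Gamma(\alpha+1)$, finishes the $+$ case. The $-$ case follows by conjugation, which amounts to reflecting the contour into the fourth quadrant. The main obstacle is the large-arc estimate, which is where the hypothesis $0<\alpha<1$ is essential; everything else is a bookkeeping of branch factors.
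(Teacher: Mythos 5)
Your proposal is correct and follows essentially the same route as the paper: the same change of variables for the first equality, Cauchy's theorem for $e^{iz}z^{\alpha-1}$ on the quarter-annulus in the first quadrant, the Jordan-inequality bound $\frac{\pi}{2}R^{\alpha-1}$ on the large arc (using $\alpha<1$), a vanishing small-arc contribution (using $\alpha>0$), and the identification of the imaginary-axis integral with $e^{\alpha\pi i/2}\Gamma(\alpha)$. The only cosmetic difference is that you bound the small arc by a direct length estimate where the paper reuses Jordan's inequality; both are valid.
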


\begin{proof}
We remark that 
the first equality can be seen
by exchanging the integral variable.   
In this proof, 
we deal with only the case of sign "$+$".

First, we prepare the four integral curves as follows.
\begin{equation*}
\begin{split}
&C_1: z=x \,\,\,(\varepsilon\leq x\leq R); \\
&C_2: z=Re^{ i \theta} \,\,\, (0\leq\theta\leq\pi/2); \\
&C_3: z= i y \,\,\, (\varepsilon\leq y\leq R);\\
&C_4: z=\varepsilon e^{i\theta} 
\,\,\, (0\leq\theta\leq\pi/2),
\end{split}
\end{equation*}
where $\varepsilon, R$ are positive numbers 
with $\varepsilon < R$ and each curve admits
a direction, which can be determined as in 
the figure below.
The anti-clockwise oriented closed curve
$\sum_{j=1}^4 C_j$ is denoted by $C$ and
the bounded domain surrounded by $C$ 
is denoted by $D$. 

\begin{center}
\begin{tikzpicture}
  
\draw[thick,-stealth] (0,0)--(5,0) node [anchor=west]{$\mathrm{Re}$};
\draw[thick,-stealth] (0,0)--(0,5) node [anchor=south]{$\mathrm{Im}$};

\node at (1,0) [below] {$\varepsilon$};
\node at (4,0) [below] {$R$};
\node at (0,1) [left] {$\varepsilon  i $};
\node at (0,4) [left] {$R  i $};

\filldraw[fill=black] (1,0) circle[radius=0.5mm];
\filldraw[fill=black] (4,0) circle[radius=0.5mm];
\filldraw[fill=black] (0,1) circle[radius=0.5mm];
\filldraw[fill=black] (0,4) circle[radius=0.5mm];

\draw[very thick,-stealth] (0,1) arc(90:45:1);
\draw[very thick] ({sqrt(2)/2},{sqrt(2)/2}) arc(45:0:1);
\draw[very thick,-stealth] (4,0) arc(0:45:4);
\draw[very thick] ({2*sqrt(2)},{2*sqrt(2)}) arc(45:90:4);
\draw[very thick,-stealth] (1,0)--(2.5,0);
\draw[very thick] (2.5,0)--(4,0);
\draw[very thick,-stealth] (0,4)--(0,2.5);
\draw[very thick] (0,2.5)--(0,1);

\node at (2.5,0) [below] {$C_1$};
\node at (3,3) [right] {$C_2$};
\node at (0,2.5) [left] {$C_3$};
\node at (1,1) {$C_4$};
   
\end{tikzpicture} 
\end{center}

Let $f(z)=e^{ i  z}z^{\a-1}$.
Since $f(z)$ is holomorphic near $\overline{D}$,
the Cauchy integral theorem implies
\begin{equation}\label{eqn:4.3}
	\int_C f(z) dz
	=\sum_{j=1}^4\int_{C_j} f(z) dz=0.
\end{equation}

The integral with respect to $C_2$ can be estimated as
\begin{equation}\label{eqn:4.4}
	\left| \int_{C_2} f(z) dz \right|
	\le R^{\a}\int_0^{\frac{\pi}{2}}e^{-R\sin \theta}d\theta
	\le R^{\a} \int_0^{\frac{\pi}{2}} e^{-\frac{2R}{\pi}\theta}d\theta
	=\frac{\pi}{2}R^{\a-1}(1-e^{-R})
\end{equation}
by using  Jordan's inequality. 
Note that the above last term tends to $0$ as $R\to \infty$.

In order to estimate the integral with respect to $C_4$, 
we define
$\psi(x)=(1-e^{-x})/x$ for $x\neq 0$ and  $=1$ for $x=0$, 
which is a $C^{\infty}$ function on $\R$.
By using $\psi$, we have
\begin{align}\label{eqn:4.5}
	\left| \int_{C_4} f(z)dz \right|
	\le \varepsilon^{\a}\int_0^{\frac{\pi}{2}} e^{-\frac{2\varepsilon}{\pi}\theta}d\theta
	= \frac{\pi}{2}\varepsilon^{\a-1}(1-e^{-\varepsilon})
	=\frac{\pi}{2}\varepsilon^{\a}\psi(\varepsilon).
\end{align}
Note that the above last term tends to $0$ as $\varepsilon \to 0$.

On the other hand, the integrals with respect to 
$C_1,C_3$ can be expressed as
\begin{equation}\label{eqn:4.6}
\begin{split}
&\int_{C_1} f(z) dz 
=\int_{\varepsilon}^{R}e^{ i x}x^{\a-1}dx;\\
&\int_{C_3} f(z) dz 
=-e^{\frac{\a\pi}{2} i }
\int_{\varepsilon}^{R}e^{-x}x^{\a-1}dx.
\end{split}
\end{equation}
Applying (\ref{eqn:4.4}), (\ref{eqn:4.5}), (\ref{eqn:4.6}) 
to (\ref{eqn:4.3}) and 
considering the limits $R \to \infty$, $\varepsilon \to 0$, 
we have
\begin{equation*}
\begin{split}
	\int_{0}^{\infty}
	e^{ i x}x^{\a-1}dx&=e^{\frac{\a\pi}{2} i }\int_0^{\infty}e^{-x}x^{\a-1}dx
	=e^{\frac{\a\pi}{2} i }\Gamma(\a)\\
	&=\frac{1}{\a}e^{\frac{\a\pi}{2} i }\Gamma(\a+1).
\end{split}
\end{equation*}

\end{proof}

\begin{remark}
The above method is not available in
the case where $\alpha\geq 1$.   
The interesting papers \cite{Nag20}, \cite{NaM20}, \cite{NaM22}  
deal with this general case and 
show that 
the equalities in Proposition~4.1 hold in some sense.   
\end{remark}


\section{In the case of the Laplace integral}

Let us consider the Laplace integral
\begin{equation}\label{eqn:5.1}
	L(t)=
\int_{-\infty}^{\infty} e^{-t f(x)} \varphi(x)dx
	\quad\,\,\, \text{for $t \in \R$,}
\end{equation}
where $f$ and $\varphi$ are the same as those 
in (\ref{eqn:1.1}) and, moreover,
$f$ has a minimum at the origin.
If there is $k\in\N$ with $k\geq 2$ such that 
$f'(0)=\cdots=f^{(k-1)}(0)=0$ and $f^{(k)}(0)\neq 0$,  
then for any positive integer $N$, 
\begin{equation}\label{eqn:5.2}
L(t)=\sum_{n=1}^{N-1} \hat{c}_n t^{-n/k} + O(t^{-N/k})
\quad \mbox{as $t \to +\infty$},
\end{equation}
where $\hat{c}_n$ are constants depending 
on $f$ and $\varphi$, and the exact value of 
the first coefficient $\hat{c}_1$ can be explicitly given.
The result in (\ref{eqn:5.2}) 
can be easily shown in a similar fashion to 
that in the case of oscillatory integrals. 

In this section, we will consider 
the properties of integrals of the form
\begin{equation}\label{eqn:5.3}
L_{\alpha}(t)=
\int_{0}^{\infty}\exp
\left(-t\frac{1}{x^{\alpha}}
\right) \varphi(x)dx
\quad\,\,\, \text{for $t \in \R$,}
\end{equation}
where $\alpha$ is a positive real number and 
$\varphi$ is as in (\ref{eqn:5.1}). 
In the case of the above integral, 
we will give a result analogous to Theorem~1.1.

\begin{theorem}\label{thm:5.1}
(i)\quad 
If $\alpha>0$ is a rational number, 
then for any positive integer $N$, 
\begin{equation}\label{eqn:5.4}
L_{\alpha}(t) = 
\sum_{n=1}^{N} \hat{A}_{n} t^{n/\alpha}+
\sum_{n=1}^{N} \hat{B}_{n} t^{n/\alpha} \log t
+\hat{\psi}_N(t)
\quad \mbox{ for $t\in\R_+$,}
\end{equation}
where $\hat{\psi}_N(t)$ is a 
$C^{\lceil \frac{N+1}{\alpha}\rceil-1}$ function on $\R_+$ and   
\begin{equation}\label{eqn:5.5}
\begin{split}
&\hat{A}_n=\frac{1}{\alpha}
\frac{\varphi^{(n-1)}(0)}{(n-1)!}
\Gamma(-n/\alpha) \,\,\,
\mbox{if $n/\alpha \not\in\N$}, \quad 
\hat{A}_n=0 \,\,\, \mbox{if $n/\alpha\in\N$,}\\
&\hat{B}_n=\frac{-1}{\alpha}
\frac{\varphi^{(n-1)}(0)}{(n-1)!}
\frac{1}{(n/\alpha)!}
   \,\,\, 
\mbox{if $n/\alpha \in\N$},   \quad 
\hat{B}_n=0 \,\,\, \mbox{if $n/\alpha\not\in\N$,}
\end{split}
\end{equation}
for $n\in\N$.

(ii)\quad 
If $\alpha>0$ is not a rational number,  then
for any positive integer $N$, 
\begin{equation}\label{eqn:5.6}
L_{\alpha}(t) = 
\sum_{n=1}^{N} \hat{A}_{n} t^{n/\a}+\hat{\phi}_N(t)
\quad \mbox{ for $t\in\R_+$,}
\end{equation}
where $\hat{\phi}_N(t)$ is a $C^{\lceil \frac{N+1}{\alpha}\rceil-1}$ 
function on $\R$ and 
$\hat{A}_n$ are as in (\ref{eqn:5.5}).
\end{theorem}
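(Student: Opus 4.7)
The plan is to mirror the structure of the proof of Theorem~1.1 given in Section~3, replacing the oscillatory kernel $e^{itx}$ by the decaying kernel $e^{-tx}$ throughout. The change of variable $y=x^{-\alpha}$ gives
\begin{equation*}
L_\alpha(t)=\frac{1}{\alpha}\int_0^\infty e^{-ty}\varphi(y^{-1/\alpha})y^{-1/\alpha-1}\,dy,
\end{equation*}
and with the cutoff $\chi$ as in (\ref{eqn:2.1}) with $1/R^\alpha<L$, I would split $L_\alpha(t)=\hat J(t)+\hat E(t)$ exactly as in Section~3; the tail piece $\hat E(t)$ is manifestly $C^\infty$ on $\R$ because the integrand has compact support away from $y=0$.

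The central technical step is to establish Laplace-transform analogs of Lemmas~\ref{lem:2.2} and \ref{lemma:2.3} for
\begin{equation*}
\hat F_p(t):=\int_0^\infty e^{-tx}x^{-p-1}\chi(x)\,dx,\qquad t\in\R_+.
\end{equation*}
Specifically, I expect to prove: if $p>-1$ is not an integer, then $\hat F_p(t)\equiv \Gamma(-p)\,t^p$ mod $C^\infty(\R_+)$; and if $p\in\Z_+$, then $\hat F_p(t)\equiv -\frac{1}{p!}t^p\log t$ mod $C^\infty(\R_+)$. The proofs follow the same templates as Lemmas~2.2 and 2.3: for $p\in(-1,0)$, write $\hat F_p(t)=\int_0^\infty e^{-tx}x^{-p-1}dx-\int_0^\infty e^{-tx}x^{-p-1}(1-\chi(x))dx$, where the second integral is smooth on $\R$ and the first equals $t^p\Gamma(-p)$ after rescaling $y=tx$. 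Note that this rescaling directly uses the \emph{definition} of the Gamma function and bypasses the contour argument needed in Section~4; this is precisely why the oscillatory phase factors $e^{-\frac{n}{2\alpha}\pi i}$ and $i^{n/\alpha}$ disappear from $\hat A_n$ and $\hat B_n$. The case $p>0$ non-integer reduces to $p\in(-1,0)$ via $m=\lceil p\rceil$ integrations by parts (without the $it/p$ factor, one gets $t/p$ instead), and the case $p\in\Z_+$ can be reduced to $p=0$ by the same trick; for $p=0$ one splits into $\int_L^{1/t}$ and $\int_{1/t}^\infty$ pieces and extracts the $-\log t$ singularity in exactly the manner of Lemma~2.3, with $e^{itx}$ replaced by $e^{-tx}$ and the entire-function expansion of $(e^{-tx}-1)/x$ playing the role of $g(x,t)$.

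Once these two modulo-$C^\infty$ identities are in hand, I would plug the Taylor expansion (\ref{eqn:3.2}) of $\varphi$ into $\hat J(t)$, exactly as in (\ref{eqn:3.3}), so that
\begin{equation*}
\hat J(t)=\sum_{n=0}^{N-1}\frac{a_n}{\alpha}\hat F_{\gamma_n}(t)+\hat R_N(t),\qquad \gamma_n=(n+1)/\alpha,
\end{equation*}
with $\hat R_N(t)$ of class $C^{\lceil (N+1)/\alpha\rceil-1}$ on $\R$ by the same boundedness argument. Splitting the sum according to whether $\gamma_n\in\Z$ or not, and substituting the formulas for $a_n=\varphi^{(n)}(0)/n!$, yields (\ref{eqn:5.4}) with the coefficients $\hat A_n,\hat B_n$ as in (\ref{eqn:5.5}) after reindexing $n\mapsto n-1$ (so that $\gamma_{n-1}=n/\alpha$). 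Part (ii) follows from the same computation, noting that when $\alpha$ is irrational the set $T_N$ is empty so no logarithmic terms appear.

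The main obstacle is not conceptual but bookkeeping: I must verify carefully that the iterated integration by parts for $\hat F_p$ with $p>0$ non-integer produces the coefficient $\Gamma(-p)$ via the functional equation $\Gamma(\alpha+1)=\alpha\Gamma(\alpha)$, and that the smoothness class of $\hat\psi_N,\hat\phi_N$ indeed matches $\lceil (N+1)/\alpha\rceil-1$ and not a smaller integer. I do not anticipate any genuinely new difficulty beyond what is already resolved in Sections~2 and 3.
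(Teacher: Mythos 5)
Your proposal is correct and follows exactly the route the paper intends: the authors omit the proof of Theorem~5.1, stating only that it ``can be easily given by the same way as that of Theorem~1.1,'' and your argument carries out precisely that adaptation, with the Laplace analogs of Lemmas~2.2 and 2.3 (where the rescaling $y=tx$ lands directly on the definition of $\Gamma$, eliminating the phase factors) being the right key ingredients. No gaps.
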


Since the proof of the above theorem 
can be easily given by the same way 
as that of Theorem~1.1, 
it is omitted.

\bigskip

{\it Acknowledgements.} \quad 
We express our sincere thanks to  
Sheehan Olver and Ory Schnitzer for kindly informing us of 
the method in the book \cite{Hin91}, which plays a crucial role
in the proof of Lemma~2.3. 
We also thank Shuhuang Xiang   
for explaining computation by using
the incomplete Gamma function 
and  
Tatsuki Takeuchi for discussion about 
generalized Fresnel integrals. 
The referee kindly gave us many useful comments. 
This work was supported by 
JSPS KAKENHI Grant Numbers JP20K03656, JP20H00116.

\end{document}